\newtheorem{theorem}{Theorem}[section]
\newtheorem{lemma}[theorem]{Lemma}
\newtheorem{corollary}[theorem]{Corollary}
\newtheorem{question}[theorem]{Question}
\newtheorem{remark}[theorem]{Remark}
\newtheorem{proposition}[theorem]{Proposition}
\newtheorem{definition}[theorem]{Definition}
\newtheorem{fact}[theorem]{Fact}
\numberwithin{equation}{section}
\newcommand{\CC}{C_k}
\newcommand{\NN}{\mathbb{N}}
\newcommand{\e}{\varepsilon}
\newcommand{\Tt}{\mathcal{T}}
\newcommand{\ff}{\mathbb{F}}
\newcommand{\KK}{\mathcal{K}}
\newcommand{\Nn}{\mathcal{N}}
\newcommand{\IR}{\mathbb{R}}
\newcommand{\Ss}{\mathbb{S}}
\newcommand{\Pp}{\mathsf{P}}
\newcommand{\Ff}{\mathfrak{F}}
\renewcommand{\phi}{\varphi}
\newcommand{\U}{\mathcal U}
\newcommand{\supp}{\mathrm{supp}}
\title[On reflexive groups and function spaces with a Mackey group topology]{On reflexive groups and function spaces with a Mackey group topology}
\author{S.~Gabriyelyan}
\address{Department of Mathematics, Ben-Gurion University of the
Negev, Beer-Sheva, P.O. 653, Israel}
\email{saak@math.bgu.ac.il}
\begin{document}

\begin{abstract}
We prove that every reflexive abelian group $G$ such that its dual group $G^\wedge$ has the $qc$-Glicksberg property is a Mackey group. We show that a reflexive abelian group of finite exponent is a Mackey group.  We prove that, for a realcompact space $X$, the space $\CC(X)$ is barrelled if and only if it is a Mackey group.
\end{abstract}

\keywords{reflexive group, Mackey group, function space, the $qc$-Glicksberg property, the Glicksberg property}
\subjclass[2000]{Primary 22A10, 22A35, 43A05; Secondary 43A40,  54H11}

\maketitle

\section{Introduction}

For an abelian topological group $(G,\tau)$ we denote by $\widehat{G}$ the group of all continuous characters of $(G,\tau)$.  If $\widehat{G}$ separates the points of $G$, the group $G$ is called {\em maximally almost periodic} (MAP for short). The class $\mathcal{LQC}$ of all locally quasi-convex groups is the most important subclass of the class $\mathcal{MAPA}$ of all MAP abelian groups  (all relevant definitions see Section \ref{sec:Mackey-prelim}).

Two topologies  $\tau$ and $\nu$ on an abelian group $G$  are said to be {\em compatible } if $\widehat{(G,\tau)}=\widehat{(G,\nu)}$.
Being motivated by the classical Mackey--Arens theorem the following notion was  introduced and studied in \cite{CMPT}: a locally quasi-convex abelian group $(G,\tau)$ is called a {\em Mackey group in  $\mathcal{LQC}$} or simply a {\em Mackey group} if for every compatible locally quasi-convex group topology $\nu$ on $G$ it follows that $\nu\leq \tau$. Every barrelled locally convex space  
is a Mackey group by \cite{CMPT}. Since every reflexive locally convex space  $E$ is barrelled by \cite[Proposition 11.4.2]{Jar}, we obtain that $E$ is a Mackey group. This result motivates the following question:
\begin{question} \label{q:Mackey-refl}
Which reflexive abelian topological groups are Mackey groups?
\end{question}
In Section \ref{sec:Mackey-prelim} we obtain a sufficient condition on a reflexive group to be a Mackey group, see Theorem \ref{t:weak-Mackey}. Using Theorem \ref{t:weak-Mackey} we obtain a complete answer to Question \ref{q:Mackey-refl} for reflexive groups of finite exponent.
\begin{theorem} \label{t:Mackey-finite-exp}
Any reflexive abelian group $(G,\tau)$ of finite exponent is a Mackey group.
\end{theorem}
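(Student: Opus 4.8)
The strategy is to reduce the theorem to the sufficient condition of Theorem~\ref{t:weak-Mackey}, which (judging from its name and the abstract) asserts that a reflexive abelian group $G$ whose dual $G^\wedge$ has the $qc$-Glicksberg property is a Mackey group. So the real content to be established is the following: \emph{if $(G,\tau)$ is a reflexive abelian group of finite exponent, then $G^\wedge$ has the $qc$-Glicksberg property.} First I would record that $G$ of finite exponent $n$ forces $G^\wedge$ to be of finite exponent $n$ as well, since every character takes values in the subgroup of $\TT$ of elements of order dividing $n$, i.e. $G^\wedge$ embeds into $(\ZZ/n\ZZ)^{|G|}$ algebraically. Moreover, because $G$ is reflexive, $G^{\wedge\wedge}\cong G$ topologically, so $G^\wedge$ is itself reflexive of finite exponent. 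Thus the whole problem is symmetric and boils down to a structural statement about reflexive groups of finite exponent.

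The key step will be to exploit the fact that a topological group of finite exponent $n$ is bounded torsion, and its character group takes values in the finite subgroup $\mathbb{Z}(n)\subseteq\TT$, which is a discrete (indeed finite) neighbourhood of $0$ that the values must avoid near the identity only trivially. Concretely: in a MAP group of exponent $n$, a set $K$ is quasi-convex if and only if it is the intersection of preimages $\chi^{-1}(S)$ for characters $\chi$ and certain subsets $S$ of $\mathbb Z(n)$; since $\mathbb Z(n)$ is finite, each such $S$ is clopen, and the polar/quasi-convexity machinery becomes combinatorial. This is exactly the setting in which the $qc$-Glicksberg property—agreement of the original topology with the Bohr topology on quasi-convex (hence, in this bounded case, on a cofinal family of) compact sets—tends to hold automatically. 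I would therefore aim to show that on any group of finite exponent the compact-open topology of the dual and the topology of pointwise convergence induce the same topology on quasi-convex compact subsets, using that the dual of a bounded group is a closed subgroup of a product of finite cyclic groups (hence, on compacta, the uniformity of pointwise convergence and of uniform convergence coincide, because the target $\mathbb Z(n)$ is finite-discrete and compact subsets are "finitely supported up to $\eps$").

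The main obstacle I anticipate is making the last uniformity argument precise without circularity: one must pass from "values in a finite discrete group" to "compact-open $=$ pointwise on quasi-convex compact sets," and the subtlety is that compact subsets of $G^\wedge$ need not be finite or equicontinuous a priori—equicontinuity of compact subsets of the dual is precisely what reflexivity (via the evaluation map being a topological isomorphism onto $G^{\wedge\wedge}$) should buy us. So the logical order is: (i) $G$ reflexive $\Rightarrow$ compact subsets of $G^\wedge$ are equicontinuous (this is built into the definition of reflexivity, or follows from it); (ii) an equicontinuous subset of $G^\wedge$ for a group of exponent $n$ is, modulo an arbitrarily small neighbourhood, "supported on finitely many coordinates" once one fixes a neighbourhood base; (iii) hence on such sets pointwise convergence equals uniform convergence, giving the $qc$-Glicksberg property of $G^\wedge$; (iv) apply Theorem~\ref{t:weak-Mackey}. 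I would expect steps (ii)–(iii) to require the most care, and possibly an appeal to a known description (e.g. from \cite{CMPT} or the structure theory of locally quasi-convex bounded groups) of equicontinuous sets in duals of exponent-$n$ groups.
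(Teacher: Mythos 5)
Your reduction to Theorem \ref{t:weak-Mackey} is exactly the paper's first move, and the intermediate target you identify --- that $G^\wedge$ has the ($qc$-)Glicksberg property when $G$ is locally quasi-convex of finite exponent --- is correct and is in fact what the paper proves (Proposition \ref{p:nuclear-Mackey}). The gap is in your steps (i)--(iii). The $qc$-Glicksberg property of $G^\wedge$ concerns quasi-convex sets $K\subseteq\widehat G$ that are compact only in the weak topology $\sigma(\widehat G,G^{\wedge\wedge})=\sigma(\widehat G,G)$; one must show these are compact in the compact-open topology. Your step (i) supplies equicontinuity only for sets that are already compact in $G^\wedge$ (that much does follow from continuity of $\alpha_G$: such a $K$ satisfies $K\subseteq K^{\triangleleft\triangleright}$ with $K^{\triangleleft}$ a neighbourhood of zero), which is the wrong class of sets. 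Equicontinuity of the merely weak${}^\ast$-compact sets is precisely $g$-barrelledness, and for reflexive groups this is \emph{equivalent} to the Glicksberg property of $G^\wedge$ (Proposition \ref{p:Mackey-g-bar-refl}); it is not ``built into the definition of reflexivity.'' So as written the plan is circular: the equicontinuity you want in (i) for the relevant sets is essentially the conclusion you are trying to reach.

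The missing ingredient, which replaces (i)--(iii), is structural and avoids equicontinuity entirely: a locally quasi-convex group of finite exponent has a base at zero of open subgroups, and so does its dual (this is where your observation that characters take values in a finite subgroup of $\Ss$ actually gets used, via \cite{AG}); a group with a subgroup topology embeds into a product of discrete groups and is therefore a locally quasi-convex nuclear group \cite{Ban}; and nuclear groups respect compactness by the Banaszczyk--Mart\'{\i}n-Peinador theorem \cite{BaMP2}. Applied to $G^\wedge$ this yields the full Glicksberg property (hence the $qc$-version), and Theorem \ref{t:weak-Mackey} finishes the proof. Without some substitute for this nuclearity/Glicksberg input, your ``finitely supported up to $\eps$'' uniformity argument has no way to get started on sets that are only pointwise compact.
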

Note that any metrizable precompact abelian group of finite exponent is a Mackey group, see \cite[Example 4.4]{BTAVM}. So there are non-reflexive Mackey groups of finite exponent. If $G$ is a metrizable {\em reflexive} group, then $G$ must be {\em complete} by \cite[Corollary 2]{Cha}. So $G$ is a Mackey group by \cite[Theorem 4.2]{CMPT}. On the other hand, there are reflexive {\em non}-complete groups $G$ of finite exponent, see \cite{GRNT}. Such groups $G$ are also Mackey by Theorem  \ref{t:Mackey-finite-exp}.

For a Tychonoff space $X$ let $\CC(X)$ be the space  of all continuous real-valued functions on $X$  endowed with the compact-open topology. 
The relations between locally convex properties of $\CC(X)$  and topological properties of $X$ are illustrated by the following classical results, see \cite[Theorem 11.7.5]{Jar}.
\begin{theorem}[Nachbin--Shirota] \label{t:Mackey-barrelled-NS}
For a Tychonoff space $X$ the space $\CC(X)$ is barrelled if and only if every functionally bounded subset of $X$ has compact closure.
\end{theorem}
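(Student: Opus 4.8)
The plan is to prove the two implications separately; the forward one is short, so I begin there. Suppose $\CC(X)$ is barrelled and let $A\subseteq X$ be functionally bounded. Consider
\[
  B_A:=\{\,f\in C(X): |f(a)|\le 1\ \text{for every}\ a\in A\,\}.
\]
Since each evaluation $f\mapsto f(a)$ is continuous on $\CC(X)$, the set $B_A$ is closed (an intersection of the closed sets $\{|f(a)|\le 1\}$); it is plainly absolutely convex; and it is absorbing precisely because every continuous function is bounded on $A$. Hence $B_A$ is a barrel, and by hypothesis a neighbourhood of $0$: there are a compact $K\subseteq X$ and $\e>0$ with $\e V_K\subseteq B_A$, where $V_K:=\{f:\sup_K|f|\le1\}$. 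Complete regularity of $X$ now forces $A\subseteq K$, because a point $a_0\in A\setminus K$ would admit $f\in C(X)$ vanishing on $K$ with $f(a_0)=2$, giving $f\in\e V_K\setminus B_A$. Thus $\overline A\subseteq K$ is compact.

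For the converse, assume functionally bounded subsets of $X$ are relatively compact and let $B$ be a barrel in $\CC(X)$. Identifying $\CC(X)'$ with the space $M_c(X)$ of compactly supported Radon measures, the polar $B^\circ$ is $\sigma(M_c(X),C(X))$-bounded (as $B$ is absorbing) and $B=(B^\circ)^\circ$ by the bipolar theorem; so it suffices to show $B^\circ$ is equicontinuous, i.e.\ that there are a compact $K$ and $M<\infty$ with $\supp\mu\subseteq K$ and $\|\mu\|\le M$ for all $\mu\in B^\circ$ (then $\tfrac1M V_K\subseteq(B^\circ)^\circ=B$). \emph{The supports.} Put $S:=\bigcup_{\mu\in B^\circ}\supp\mu$ and suppose $\overline S$ is not compact; then by hypothesis $S$ is not functionally bounded, so some $g\in C(X)$ is unbounded on $S$. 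Pick $x_n\in\supp\mu_n$, $\mu_n\in B^\circ$, with $g(x_n)\to\infty$ and thin the sequence so that the open neighbourhoods $O_n:=g^{-1}\bigl((\tfrac12 g(x_n),\,2g(x_n))\bigr)$ of the $x_n$ are pairwise disjoint; they cannot accumulate in $X$, since $g$ would be unbounded near an accumulation point. A standard regularity argument (inner regularity of $|\mu_m|$, Urysohn functions on $X$, and Tietze's theorem applied inside $\beta X$) then provides, for each $m$, an $h_m\in C(X)$ supported inside $O_m$ with $\int h_m\,d\mu_m>0$. Because the $O_m$ are disjoint and non-accumulating, $f_\beta:=\sum_m\beta_m h_m$ belongs to $C(X)$ for every $\beta=(\beta_m)\in\RR^{\NN}$, and each $\supp\mu_n$ meets only finitely many $O_m$, so $T_n(\beta):=\int f_\beta\,d\mu_n=\sum_m\beta_m\int h_m\,d\mu_n$ is a finite sum and $T_n\colon\RR^{\NN}\to\RR$ is continuous linear. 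Since $B^\circ$ is $\sigma$-bounded, $\sup_n|T_n(\beta)|<\infty$ for every $\beta$; by the uniform boundedness principle on the Fréchet space $\RR^{\NN}$ the family $\{T_n\}$ is therefore equicontinuous, which forces all the $T_n$ to depend only on the coordinates in one common finite set $J\subseteq\NN$. Hence $\int h_m\,d\mu_n=0$ whenever $m\notin J$, contradicting $\int h_n\,d\mu_n>0$ for the infinitely many $n\notin J$. So $K:=\overline S$ is compact. \emph{The masses.} With all supports inside $K$ and $C(X)$ restricting onto $C(K)$ (Tietze in $\beta X$), the restrictions $\mu|_K$ form a pointwise bounded subset of $C(K)'=M(K)$, hence a norm-bounded one by the uniform boundedness principle on the Banach space $C(K)$; and $\|\mu\|=\|\mu|_K\|$. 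This gives the required $M$ and completes the converse.

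The main obstacle is the support step of the converse. Its real difficulty is that the continuous functionals on $\CC(X)$ are honest measures rather than point evaluations: a function which is large at $x_n$ need not make $\int\cdot\,d\mu_n$ large, because $\mu_n$ may oscillate near $x_n$, so naive pointwise estimates do not suffice. The device that overcomes this is to package all the admissible test functions $f_\beta$ into the single Fréchet space $\RR^{\NN}$ and let the uniform boundedness principle there extract the contradiction; I expect the bookkeeping around this step---disjointifying the $O_n$, verifying they do not accumulate, and producing the $h_m$ by regularity---to be the most technical part of a full write-up. Finally, note that the $\mu$-space hypothesis is used exactly once, to pass from ``$\overline S$ is not compact'' to ``$S$ is not functionally bounded'', and the forward direction shows this is the only place it can enter.
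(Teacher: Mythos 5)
The paper does not prove this statement at all: it is quoted as the classical Nachbin--Shirota theorem with a reference to Jarchow, so there is no in-paper argument to compare against. Your proof is correct and is essentially the standard textbook argument that the citation points to: the forward direction via the barrel $B_A$ is exactly right, and the converse correctly reduces barrelledness to equicontinuity of $B^\circ$, splitting into the support step (a gliding-hump argument against a non-compact union of supports) and the mass step (uniform boundedness on $C(K)$ after extending via $\beta X$). The only places a referee would ask you to expand are routine: you should normalize $g\ge 1$ before forming the sets $O_n=g^{-1}\bigl((\tfrac12 g(x_n),2g(x_n))\bigr)$, and the ``standard regularity argument'' producing $h_m$ supported in $O_m$ with $\int h_m\,d\mu_m>0$ deserves a full sentence (note $O_m$ is a cozero set and $|\mu_m|(O_m)>0$ since $x_m\in\supp\mu_m$, so some continuous function vanishing off $O_m$ has nonzero integral, and its sign can be adjusted). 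Neither point affects the correctness of the argument.
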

This theorem motivates the following question posed in \cite{Gab-Cp}: For which Tychonoff spaces $X$ the space $\CC(X)$  is a Mackey group? In the next theorem we obtain a partial answer to this question.
\begin{theorem} \label{t:Mackey-Ck-realcom}
For a realcompact space $X$, the space $\CC(X)$ is barrelled if and only if it is a Mackey group.
\end{theorem}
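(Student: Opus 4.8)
The plan is to prove the two implications separately, with only one of them carrying real content. First, if $\CC(X)$ is a Mackey group, then it is barrelled: the compact-open topology $\tau_k$ on $C(X)$ is a locally convex topology, and since every barrelled locally convex space is a Mackey group (as recalled in the introduction, via \cite{CMPT}), one expects the converse direction to fail in general — but here we are given that $X$ is realcompact. The key classical fact I would invoke is that for realcompact $X$, the space $\CC(X)$ is barrelled if and only if it is quasibarrelled (equivalently, bornological), and more to the point, that its \emph{Mackey topology in the locally convex sense} is exactly $\tau_k$ precisely when the functionally bounded sets are relatively compact (Nachbin--Shirota, Theorem \ref{t:Mackey-barrelled-NS}). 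So the heart of the matter is: assuming $X$ realcompact and \emph{not} satisfying the Nachbin--Shirota condition, exhibit a compatible locally quasi-convex group topology on $C(X)$ strictly finer than $\tau_k$, thereby showing $\CC(X)$ is not a Mackey group.

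The natural candidate for that strictly finer compatible topology is $\mu(C(X),M(X))$, the locally convex Mackey topology associated with the dual pair $\langle C(X), C(X)'\rangle$ where $C(X)' = (\CC(X))^\wedge \otimes \RR$ is the topological dual (the continuous real-valued linear functionals, i.e. compactly supported Radon measures when $X$ is realcompact). Since $\tau_k$ and $\mu(C(X), C(X)')$ have the same continuous \emph{linear} functionals by the Mackey--Arens theorem, and since on a locally convex space the continuous characters are determined by the continuous linear functionals (a character is of the form $x \mapsto \exp(2\pi i \langle x, f\rangle)$ modulo the continuous-functional description), these two topologies are compatible as \emph{group} topologies. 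A locally convex topology is in particular locally quasi-convex. If $\CC(X)$ is not barrelled, then by Nachbin--Shirota $\tau_k \neq \mu(C(X),C(X)')$, and since $\tau_k \le \mu(C(X),C(X)')$ always, we get a compatible locally quasi-convex topology strictly finer than $\tau_k$; hence $\CC(X)$ is not Mackey. Contrapositively, Mackey implies barrelled.

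For the forward direction, barrelled $\Rightarrow$ Mackey, I would simply cite the result from the introduction: every barrelled locally convex space is a Mackey group \cite{CMPT}. That requires no hypothesis on $X$ at all, so the realcompactness is used only in the reverse implication (to identify $C(X)'$ concretely and to have the Nachbin--Shirota dichotomy available in the clean form above).

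The main obstacle I anticipate is the claim that $\tau_k$ and the locally convex Mackey topology $\mu(C(X),C(X)')$ are compatible \emph{as group topologies} and not merely as locally convex topologies — one must check that no new continuous characters appear, i.e. that $(C(X),\mu)^\wedge = (C(X),\tau_k)^\wedge$. For locally convex spaces this is standard (continuous characters on a locally convex space factor through the dual), but it should be stated carefully. A secondary point requiring care is the precise identification, for realcompact $X$, of the topological dual of $\CC(X)$ and the verification that the Nachbin--Shirota condition is exactly what makes $\tau_k$ equal to its own locally convex Mackey topology; both are classical (see \cite{Jar}), but the write-up should pin down references. If instead the paper prefers a self-contained route, one could replace the appeal to $\mu(C(X),C(X)')$ by an explicit finer compatible topology built from a functionally bounded, non-relatively-compact set, but the Mackey-topology argument is cleaner and is the one I would present.
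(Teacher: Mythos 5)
Your forward direction (barrelled $\Rightarrow$ Mackey group, via \cite{CMPT}) is exactly the paper's. The reverse direction, however, hinges on a claim that is not justified and is not what the Nachbin--Shirota theorem says. You assert that, for realcompact $X$, the compact-open topology $\tau_k$ coincides with the locally convex Mackey topology $\mu\big(C(X),C(X)'\big)$ precisely when every functionally bounded subset of $X$ is relatively compact, i.e.\ precisely when $\CC(X)$ is barrelled. Nachbin--Shirota characterizes \emph{barrelledness}: every $\sigma\big(M_c(X),C(X)\big)$-bounded subset of the dual is equicontinuous. Carrying the Mackey topology only requires equicontinuity of the absolutely convex weakly \emph{compact} subsets of the dual, a strictly weaker demand in general; barrelled spaces are Mackey spaces but not conversely, and you give no argument that the two notions coincide for $\CC(X)$. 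Without that, your candidate finer compatible topology $\mu\big(C(X),C(X)'\big)$ may simply equal $\tau_k$ even when $\CC(X)$ fails to be barrelled, and the construction yields nothing. This is the genuine gap, and it sits exactly at the point where the Mackey \emph{group} problem is harder than the Mackey \emph{space} problem.

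The paper's proof of the hard direction is necessarily non-linear (Lemma \ref{l:Mackey-Ck}): from a functionally bounded set met by a discrete family of open sets it forms the point measures $\chi_n=(1/n)\delta_{x_n}$ and embeds $C(X)$ into $\CC(X)\times \Ff_0(\Ss)$ via $f\mapsto \big(f,(\exp\{2\pi i\chi_n(f)\})_n\big)$. The induced topology $\Tt_0$ is a locally quasi-convex \emph{group} topology strictly finer than $\tau_k$ that is not a vector topology; its linear counterpart $\Tt$, induced from $\CC(X)\times c_0$, is \emph{not} compatible, because it admits the new continuous functionals $\sum_n c_n\chi_n$. The substance of the lemma is the verification --- via the test function $h=\left[\frac{1}{4c_\alpha}\right]\alpha\, g_\alpha$ and the resulting value $\eta(h)$ near $1/4$ --- that winding through the circle destroys these extra functionals at the level of characters, so $\Tt_0$ stays compatible with $\tau_k$ while separating the sequence $(n/2)g_n$ from $0$. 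Any correct proof along your lines would have to supply such a compatible finer topology explicitly; the off-the-shelf locally convex Mackey topology is not shown to do the job.
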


It is well-known (see \cite[Theorem 13.6.1]{Jar}) that a Tychonoff space $X$ is realcompact if and only if the space $\CC(X)$ is bornological. This result and Theorem \ref{t:Mackey-Ck-realcom} imply
\begin{corollary}
A bornological space $\CC(X)$ is barrelled if and only if it is a Mackey group.
\end{corollary}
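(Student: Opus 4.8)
The plan is to deduce the Corollary directly from Theorem~\ref{t:Mackey-Ck-realcom} by passing through the classical description of realcompactness recalled just before the statement. The only thing to check is that the standing hypothesis ``$\CC(X)$ is bornological'' forces $X$ to be realcompact, so that Theorem~\ref{t:Mackey-Ck-realcom} becomes applicable.

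First I would invoke the result of \cite[Theorem 13.6.1]{Jar}: for a Tychonoff space $X$, the space $\CC(X)$ is bornological if and only if $X$ is realcompact. Hence, under the hypothesis of the Corollary, $X$ is realcompact. Second, with $X$ realcompact, Theorem~\ref{t:Mackey-Ck-realcom} applies verbatim and yields that $\CC(X)$ is barrelled if and only if it is a Mackey group. Since both implications of the stated equivalence are covered by this single application of Theorem~\ref{t:Mackey-Ck-realcom}, the proof is complete.

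I do not expect any genuine obstacle here, as the Corollary is a formal consequence of Theorem~\ref{t:Mackey-Ck-realcom}. The one point worth flagging is that, among locally convex spaces, neither ``barrelled'' nor ``bornological'' implies the other, so one really does need the bridge through realcompactness rather than a direct implication between the two locally convex properties. If desired, one may additionally combine this with the Nachbin--Shirota Theorem~\ref{t:Mackey-barrelled-NS} to restate the equivalence, for bornological $\CC(X)$, as: every functionally bounded subset of $X$ has compact closure if and only if $\CC(X)$ is a Mackey group.
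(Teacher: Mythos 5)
Your proposal is correct and is exactly the paper's argument: the corollary is stated right after the citation of \cite[Theorem 13.6.1]{Jar} (bornological $\CC(X)$ $\Leftrightarrow$ realcompact $X$), and the paper derives it precisely by combining that equivalence with Theorem~\ref{t:Mackey-Ck-realcom}. Nothing further is needed.
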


We prove Theorem \ref{t:Mackey-Ck-realcom} in Section \ref{sec:Mackey-Ck-realcom}.


\section{Proof of Theorem \ref{t:Mackey-finite-exp}}  \label{sec:Mackey-prelim}


Denote by $\mathbb{S}$ the unit circle group and set $\Ss_+ :=\{z\in  \Ss:\ {\rm Re}(z)\geq 0\}$.
Let $G$ be an abelian topological group.  If $\chi\in \widehat{G}$, it is considered as a homomorphism from $G$ into $\mathbb{S}$.
A subset $A$ of $G$ is called {\em quasi-convex} if for every $g\in G\setminus A$ there exists   $\chi\in \widehat{G}$ such that $\chi(x)\notin \Ss_+$ and $\chi(A)\subseteq \Ss_+$.
If $A\subseteq G$ and $B\subseteq \widehat{G}$ set
\[
A^\triangleright :=\{ \chi\in \widehat{G}: \chi(A)\subseteq \Ss_+\}, \quad B^\triangleleft:=\{ g\in G: \chi(g)\in\Ss_+ \; \forall \chi\in B\}.
\]
Then $A$ is quasi-convex if and only if $A^{\triangleright\triangleleft}=A$.
An abelian topological group $G$ is called {\em locally quasi-convex} if it admits a neighborhood base at the neutral element $0$ consisting of quasi-convex sets.
The dual group $\widehat{G}$ of $G$ endowed with the compact-open topology is denoted by $G^{\wedge}$. The homomorphism $\alpha_G : G\to G^{\wedge\wedge} $, $g\mapsto (\chi\mapsto \chi(g))$, is called {\em the canonical homomorphism}. If $\alpha_G$ is a topological isomorphism the group $G$ is called {\em  reflexive}.

If $G$ is a $MAP$ abelian group, we  denote by $\sigma(G,\widehat{G})$ the {\em weak topology} on $G$, i.e., the smallest group topology on $G$ for which the elements of $\widehat{G}$ are continuous. In the dual group $\widehat{G}$, we denote by $\sigma(\widehat{G},G)$ the topology of pointwise convergence.

We use the next fact, see Proposition 1.5 of \cite{Ban}.
\begin{fact} \label{f:Mackey-Ban}
Let $U$ be a neighborhood of zero of an abelian topological group $G$. Then  $U^\triangleright$ is a compact subset of $(\widehat{G},\sigma(\widehat{G},G))$.
\end{fact}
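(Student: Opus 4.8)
The plan is to exhibit $U^\triangleright$ as a closed subset of the compact product space $\mathbb{S}^G$ and to verify that this closed set consists entirely of continuous characters; since the topology $\sigma(\widehat G,G)$ is by definition the topology of pointwise convergence, i.e.\ the subspace topology inherited from $\mathbb{S}^G$, compactness of $U^\triangleright$ will then be immediate. Concretely, let $K$ be the set of \emph{all} group homomorphisms $f\colon G\to\mathbb{S}$ (not assumed continuous) with $f(U)\subseteq\mathbb{S}_+$. For each pair $g,h\in G$ the set $\{f\in\mathbb{S}^G: f(g+h)=f(g)f(h)\}$ is closed (it is the preimage of the diagonal of $\mathbb{S}\times\mathbb{S}$ under a continuous evaluation map), and for each $g\in U$ the set $\{f\in\mathbb{S}^G: f(g)\in\mathbb{S}_+\}$ is closed because $\mathbb{S}_+$ is closed in $\mathbb{S}$; hence $K$ is closed, and therefore compact, in $\mathbb{S}^G$ by Tychonoff's theorem. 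Plainly $U^\triangleright\subseteq K$, so it remains only to prove $K\subseteq\widehat G$, which also yields $K=U^\triangleright$ and finishes the argument.

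So the crux is to show that every homomorphism $f\colon G\to\mathbb{S}$ with $f(U)\subseteq\mathbb{S}_+$ is continuous. As $f$ is a homomorphism it is enough to verify continuity at $0$. Given a neighborhood $V$ of $1$ in $\mathbb{S}$, choose $n\in\NN$ with $\{z\in\mathbb{S}:|\arg z|\le\pi/(2n)\}\subseteq V$ (taking the principal argument), and, using continuity of addition in $G$, pick a symmetric neighborhood $W$ of $0$ with $\underbrace{W+\cdots+W}_{n}\subseteq U$. Then for every $g\in W$ and every $k\in\{1,\dots,n\}$ we have $kg\in U$ (since $0\in W$), hence $f(g)^k=f(kg)\in\mathbb{S}_+$.

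The main obstacle is the elementary arithmetic lemma that if $z\in\mathbb{S}$ satisfies $z^k\in\mathbb{S}_+$ for $k=1,\dots,n$, then $|\arg z|\le\pi/(2n)$. To see it, write $\theta=\arg z\in(-\pi,\pi]$; since $\mathbb{S}_+$ is stable under conjugation we may assume $\theta\ge 0$, and suppose for contradiction that $\theta>\pi/(2n)$. Then $n\theta>\pi/2$, so there is a least $k\in\{1,\dots,n\}$ with $k\theta>\pi/2$; by minimality $(k-1)\theta\le\pi/2$, whence $k\theta=(k-1)\theta+\theta\le\pi/2+\pi=3\pi/2$, with equality forcing $\theta=\pi$ and hence $k=3/2\notin\NN$. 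Thus $k\theta\in(\pi/2,3\pi/2)$, so $\mathrm{Re}(z^k)=\cos(k\theta)<0$, contradicting $z^k\in\mathbb{S}_+$. Applying the lemma to $z=f(g)$ gives $f(g)\in\{z\in\mathbb{S}:|\arg z|\le\pi/(2n)\}\subseteq V$, so $f(W)\subseteq V$; therefore $f$ is continuous, $K=U^\triangleright\subseteq\widehat G$, and $U^\triangleright$ is compact in $(\widehat G,\sigma(\widehat G,G))$ as claimed.
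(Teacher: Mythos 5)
Your proof is correct and complete. The paper itself gives no argument for this Fact --- it simply cites Proposition 1.5 of Banaszczyk's book --- and what you have written is essentially the standard proof behind that citation: realize $U^\triangleright$ as a closed subset of the compact space $\Ss^G$ by showing that every (a priori discontinuous) homomorphism $f\colon G\to\Ss$ with $f(U)\subseteq\Ss_+$ is automatically continuous, the key point being the arithmetic lemma that $z,z^2,\dots,z^n\in\Ss_+$ forces $|\arg z|\le \pi/(2n)$, which you verify correctly.
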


Let $G$ be a $MAP$ abelian group and $\mathcal{P}$  a topological property. Denote by $\Pp(G)$  the set of all  subspaces of $G$ with $\mathcal{P}$.
Following \cite{ReT3},  $G$ {\em respects} $\mathcal{P}$ if $\Pp(G)=\Pp\big(G,\sigma(G,\widehat{G})\big)$.
Below we define  weak versions of respected properties. For a $MAP$ abelian group $G$, we denote by $\Pp_{qc}(G)$ the set of all {\em quasi-convex} subspaces of $G$ with $\mathcal{P}$.
\begin{definition} \label{def:qc-Glicksberg} {\em
Let $(G,\tau)$ be a $MAP$ abelian group. We say that
\begin{enumerate}
\item[{\rm (i)}] $(G,\tau)$  {\em respects}  $\mathcal{P}_{qc}$ if $\Pp_{qc}(G)=\Pp_{qc}\big(G,\sigma(G,\widehat{G})\big)$;
\item[{\rm (ii)}]  $(G,\tau)^\wedge$ {\em weak${}^\ast$  respects}  $\mathcal{P}$ if $\Pp(G^\wedge)=\Pp\big(\widehat{G},\sigma(\widehat{G},G)\big)$;
\item[{\rm (iii)}] $(G,\tau)^\wedge$ {\em weak${}^\ast$  respects}  $\mathcal{P}_{qc}$ if $\Pp_{qc}(G^\wedge)=\Pp_{qc}\big(\widehat{G},\sigma(\widehat{G},G)\big)$.
\end{enumerate} }
\end{definition}

In the case $\mathcal{P}$ is the property $\mathcal{C}$ to be a compact subset and  a $MAP$ abelian group $(G,\tau)$  (or $G^\wedge$) (weak${}^\ast$) respects $\mathcal{P}$ or $\mathcal{P}_{qc}$, we shall say that  the group $G$ (or $G^\wedge$) has the ({\em weak${}^\ast$}) {\em Glicksberg property} or {\em $qc$-Glicksberg property}, respectively. So $G$ has the {\em Glicksberg property} or {\em respects compactness} if any weakly compact subset of $G$ is also compact in the original topology $\tau$. By a famous result of Glicksberg, any abelian locally compact group respects compactness. Clearly, if a $MAP$ abelian group $(G,\tau)$ has the Glicksberg property, then it also has the $qc$-Glicksberg property, and   if $(G,\tau)^\wedge$ has the  weak${}^\ast$ Glicksberg property, then it has also the  weak${}^\ast$  $qc$-Glicksberg property.
\begin{proposition} \label{p:Mackey-weak-qc-Glick}
Let $(G,\tau)$ be a locally quasi-convex group such that the canonical homomorphism $\alpha_G$ is continuous. If $(G,\tau)^\wedge$ has the weak${}^\ast$ $qc$-Glicksberg property, then  $(G,\tau)$ is a Mackey group.
\end{proposition}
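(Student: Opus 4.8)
The plan is to show that any compatible locally quasi-convex topology $\nu$ on $G$ satisfies $\nu \le \tau$ by arguing at the level of basic neighborhoods. Let $V$ be a quasi-convex $\nu$-neighborhood of $0$; I want to show $V$ is a $\tau$-neighborhood of $0$. Since $\tau$ and $\nu$ are compatible, both groups have the same dual, call it $H = \widehat{G}$, and the weak topology $\sigma(G,H)$ is the same whether computed from $\tau$ or from $\nu$; in particular $\sigma(G,H) \le \nu \le \tau$ (the last inequality we do not yet have, but $\sigma(G,H)\le\tau$ holds since characters in $H$ are $\tau$-continuous). Because $V$ is quasi-convex, $V = V^{\triangleright\triangleleft}$, and $K := V^{\triangleright}$ is a subset of $H$. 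The point is that $V$ is the polar $K^{\triangleleft}$ of this set $K \subseteq H$, so it suffices to show $K$ is an equicontinuous subset of $(G,\tau)^{\wedge\wedge}$-dual sense, i.e. that $K^{\triangleleft}$ is a $\tau$-neighborhood of $0$; equivalently, that $K$ is contained in $U^{\triangleright}$ for some $\tau$-neighborhood $U$ of $0$.

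The key step is to identify the topology on $K$. Since $V$ is a $\nu$-neighborhood of $0$, Fact \ref{f:Mackey-Ban} applied to $(G,\nu)$ gives that $K = V^{\triangleright}$ is a compact subset of $(H, \sigma(H,G))$ — this is exactly $\sigma(\widehat{G},G)$, which by compatibility is independent of which compatible topology we use. Moreover $K$ is quasi-convex in $H$: it is a polar, $V^{\triangleright} = (V^{\triangleright\triangleleft})^{\triangleright}$, hence equals its own bipolar. So $K$ is a \emph{quasi-convex, $\sigma(\widehat{G},G)$-compact} subset of $\widehat{G}$. Now the hypothesis that $(G,\tau)^\wedge$ has the weak${}^\ast$ $qc$-Glicksberg property enters: by Definition \ref{def:qc-Glicksberg}(iii) with $\mathcal{P} = \mathcal{C}$, we get $\mathcal{C}_{qc}\big(G^\wedge\big) = \mathcal{C}_{qc}\big(\widehat{G},\sigma(\widehat{G},G)\big)$, so $K$ is in fact compact in $G^\wedge = (\widehat{G}, \text{compact-open from } \tau)$.

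It remains to pass from ``$K$ is compact in $G^\wedge$'' to ``$K^\triangleleft$ is a $\tau$-neighborhood of $0$''. This is where the continuity of $\alpha_G$ is used: since $\alpha_G : G \to G^{\wedge\wedge}$ is continuous, the preimage under $\alpha_G$ of any neighborhood of $0$ in $G^{\wedge\wedge}$ is a $\tau$-neighborhood of $0$. The set $K^\triangleright$ (polar taken in the dual of $G^\wedge$, i.e. in $G^{\wedge\wedge}$) is a neighborhood of $0$ in $G^{\wedge\wedge}$ precisely because $K$ is a compact subset of $G^\wedge$ and $G^{\wedge\wedge}$ carries the compact-open topology over $G^\wedge$. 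Pulling back, $\alpha_G^{-1}\big(K^\triangleright\big) = \{g \in G : \chi(g) \in \Ss_+ \ \forall \chi \in K\} = K^\triangleleft = V^{\triangleright\triangleleft} = V$, so $V$ is a $\tau$-neighborhood of $0$. Since the quasi-convex $\nu$-neighborhoods form a base at $0$ for $\nu$, this gives $\nu \le \tau$, and $(G,\tau)$ is Mackey.

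The main obstacle I anticipate is the bookkeeping in the last paragraph: one must be careful that the polar $K^\triangleright$ computed inside $G^{\wedge\wedge}$ pulls back under $\alpha_G$ to exactly $K^\triangleleft \subseteq G$, and that compactness of $K$ in the compact-open topology of $G^\wedge$ is genuinely what makes $K^\triangleright$ a $0$-neighborhood in $G^{\wedge\wedge}$ — this is a standard duality fact but deserves an explicit line. A secondary point to verify carefully is that ``quasi-convex in $\widehat{G}$'' for the set $K$ means quasi-convex with respect to the duality $\langle \widehat{G}, G\rangle$ (via $\alpha_G$), which matches the set $\mathcal{C}_{qc}$ appearing in Definition \ref{def:qc-Glicksberg}(iii); once the definitions are aligned, the Glicksberg hypothesis applies verbatim.
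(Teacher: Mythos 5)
Your proposal is correct and follows essentially the same route as the paper's proof: apply Fact \ref{f:Mackey-Ban} to get that the polar $K=V^{\triangleright}$ is quasi-convex and $\sigma(\widehat{G},G)$-compact, upgrade to compactness in $G^{\wedge}$ via the weak${}^\ast$ $qc$-Glicksberg property, and pull back the neighborhood $K^{\triangleright}$ of zero in $G^{\wedge\wedge}$ through the continuous map $\alpha_G$ to recover $V=K^{\triangleleft}$ as a $\tau$-neighborhood. The only difference is that you make explicit (correctly) why $K$ is quasi-convex, a point the paper states without comment.
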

\begin{proof}
Let $\nu$ be a locally quasi-convex topology on $G$ compatible with $\tau$ and let $U$ be a quasi-convex $\nu$-neighborhood of zero. Fact \ref{f:Mackey-Ban} implies that the quasi-convex subset $K:=U^\triangleright$ of $\widehat{G}$ is $\sigma(\widehat{G},G)$-compact, and hence $K$ is a compact subset of  $G^\wedge$ by the weak${}^\ast$ $qc$-Glicksberg property. Note that, by definition, $K^\triangleright$ is a neighborhood of zero in $G^{\wedge\wedge}$. As $\alpha_G$ is continuous,  $U=K^\triangleleft =\alpha_G^{-1}(K^\triangleright)$ is a neighborhood of zero in $G$. Hence $\nu\leq\tau$. Thus $(G,\tau)$ is a Mackey group.
\end{proof}

The following theorem gives a partial answer to Question \ref{q:Mackey-refl}.
\begin{theorem} \label{t:weak-Mackey}
Let $(G,\tau)$ be a reflexive abelian group. If $(G,\tau)^\wedge$ has the $qc$-Glicksberg property (in particular, the Glicksberg property), then  $(G,\tau)$ is a Mackey group.
\end{theorem}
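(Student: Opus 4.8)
The plan is to verify the three hypotheses of Proposition~\ref{p:Mackey-weak-qc-Glick} for $(G,\tau)$; once this is done the conclusion is immediate. Two of the hypotheses are essentially free. Since $G$ is reflexive, the canonical homomorphism $\alpha_G\colon G\to G^{\wedge\wedge}$ is a topological isomorphism, so in particular it is continuous. Moreover, via $\alpha_G$ the group $G$ is topologically isomorphic to the dual group $G^{\wedge\wedge}=(G^\wedge)^\wedge$ endowed with the compact-open topology, and it is a standard fact that every dual group with the compact-open topology is locally quasi-convex (the polars $K^\triangleright$ of compact sets form a neighborhood base at $0$ consisting of quasi-convex sets). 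Hence $(G,\tau)$ is locally quasi-convex.

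The substantive point is the third hypothesis, namely that $(G,\tau)^\wedge$ has the \emph{weak${}^\ast$} $qc$-Glicksberg property; the key observation is that for a \emph{reflexive} group $G$ this is nothing other than the $qc$-Glicksberg property of $(G,\tau)^\wedge$ assumed in the theorem. Indeed, write $H:=G^\wedge$. By Definition~\ref{def:qc-Glicksberg}(i) applied to $H$, the group $H$ has the $qc$-Glicksberg property exactly when $\Pp_{qc}(H)=\Pp_{qc}\big(H,\sigma(H,\widehat H)\big)$, where $\widehat H=\widehat{G^\wedge}=G^{\wedge\wedge}$. Reflexivity of $G$ gives $\alpha_G(G)=G^{\wedge\wedge}$, and each character $\alpha_G(g)\in G^{\wedge\wedge}$ acts on $H=\widehat G$ as the point evaluation $\chi\mapsto\chi(g)$; hence the weak topology $\sigma(H,\widehat H)=\sigma(G^\wedge,G^{\wedge\wedge})$ coincides with the topology $\sigma(\widehat G,G)$ of pointwise convergence on $G$. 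Consequently $\Pp_{qc}\big(H,\sigma(H,\widehat H)\big)=\Pp_{qc}\big(\widehat G,\sigma(\widehat G,G)\big)$, so the equality $\Pp_{qc}(G^\wedge)=\Pp_{qc}\big(\widehat G,\sigma(\widehat G,G)\big)$ --- which is, by Definition~\ref{def:qc-Glicksberg}(iii), exactly the weak${}^\ast$ $qc$-Glicksberg property of $(G,\tau)^\wedge$ --- is equivalent to the $qc$-Glicksberg property of $G^\wedge$. The parenthetical strengthening of the hypothesis needs no separate treatment: if $(G,\tau)^\wedge$ has the Glicksberg property then it has the $qc$-Glicksberg property, as observed just before Proposition~\ref{p:Mackey-weak-qc-Glick}.

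Having checked all three hypotheses, Proposition~\ref{p:Mackey-weak-qc-Glick} yields that $(G,\tau)$ is a Mackey group. I do not expect a genuine obstacle; the one place demanding care is purely bookkeeping with dualities --- namely keeping straight that the weak topology of $G^\wedge$ relative to its own dual $\widehat{G^\wedge}$ and the weak${}^\ast$ topology of $G^\wedge$ relative to $G$ are a priori distinct, and that it is precisely surjectivity of $\alpha_G$ (reflexivity) that identifies them and thereby lets the hypothesis of the theorem feed into the proposition.
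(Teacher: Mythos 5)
Your proof is correct and follows essentially the same route as the paper's (one-line) argument: reflexivity identifies $\sigma(\widehat G,G)$ with $\sigma(G^\wedge,G^{\wedge\wedge})$, so the weak${}^\ast$ $qc$-Glicksberg property of $G^\wedge$ coincides with the $qc$-Glicksberg property, and Proposition~\ref{p:Mackey-weak-qc-Glick} applies. The only difference is that you also spell out the (implicit) verification that $\alpha_G$ is continuous and that $(G,\tau)$ is locally quasi-convex, which the paper leaves to the reader.
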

\begin{proof}
Since $G$ is a reflexive group, the weak${}^\ast$ $qc$-Glicksberg property coincides with the $qc$-Glicksberg property, and  Proposition \ref{p:Mackey-weak-qc-Glick} applies.
\end{proof}

\begin{remark} \label{rem:Mackey-c0(S)} {\em
In Theorem \ref{t:weak-Mackey} the reflexivity of $G$ is essential. Indeed, let $G$ be a proper dense subgroup of a compact metrizable abelian group $X$. Then $G^\wedge =X^\wedge$ (see \cite{Aus,Cha}), and hence the discrete group $G^\wedge$ has the Glicksberg property. Now set $c_0 (\Ss):= \{ (z_n) \in \Ss^\mathbb{N} :\; z_n \to 1\}$. Denote by $\mathfrak{p}_0$ the product topology on $c_0 (\Ss)$ induced from $\Ss^\NN$,  and let $\mathfrak{u}_0$ be the uniform topology on $c_0 (\Ss)$ induced by the metric $d\big((z_n^1), (z_n^2 ) \big)= \sup \{ |z_n^1 -z_n^2 |, n\in\NN \}$. Then, by \cite[Theorem 1]{Gab}, $\mathfrak{p}_0$ and $\mathfrak{u}_0$ are locally quasi-convex and compatible topologies on $c_0 (\Ss)$ such that $\mathfrak{p}_0 <\mathfrak{u}_0$. Thus the group $G:=\big( c_0 (\Ss),\mathfrak{p}_0)$ is a precompact arc-connected metrizable group such that $G^\wedge$ has the Glicksberg property, but $G$ is {\em not} a Mackey group.}
\end{remark}

Theorem \ref{t:weak-Mackey} motivates the following question: {\em For which (reflexive) abelian groups $G$ the dual group $G^\wedge$ has the (weak${}^\ast$, weak${}^\ast$ $qc$-, $qc$-) Glicksberg property}? Below in Propositions \ref{p:Mackey-g-barrelled} and \ref{p:nuclear-Mackey} we give some sufficient conditions on $G$ for which $G^\wedge$ has the Glicksberg property.

Recall (see \cite{CMPT}) that a $MAP$ abelian group $G$ is called {\em $g$-barrelled} if any $\sigma(\widehat{G},G)$-compact subset of  $\widehat{G}$ is equicontinuous. Every barrelled locally convex space $E$ is a $g$-barrelled group, but the converse does not hold in general, see \cite{CMPT}. Every locally quasi-convex $g$-barrelled abelian group $G$ is a Mackey group by Theorem 4.2 of \cite{CMPT}.
\begin{proposition} \label{p:Mackey-g-barrelled}
If $G$ is a $g$-barrelled group, then $G^\wedge$ has the Glicksberg property.
\end{proposition}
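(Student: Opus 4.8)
The plan is to take an arbitrary weakly compact subset $K$ of the topological group $G^\wedge$ and to prove that $K$ is compact in $G^\wedge$ itself, i.e.\ in the compact-open topology of $\widehat{G}$. The dual of $G^\wedge$ is $G^{\wedge\wedge}$, so the weak topology of $G^\wedge$ is $\sigma\big(\widehat{G},G^{\wedge\wedge}\big)$. Since $\alpha_G(G)\subseteq G^{\wedge\wedge}$, this topology is finer than the topology $\sigma(\widehat{G},G)$ of pointwise convergence on $G$; moreover $\sigma(\widehat{G},G)$ is Hausdorff, because a nonzero $\chi\in\widehat{G}$ is nontrivial on some $g\in G$, so $\alpha_G(G)$ separates the points of $\widehat{G}$. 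Hence the first step is the cheap remark that a $\sigma\big(\widehat{G},G^{\wedge\wedge}\big)$-compact set $K$ is automatically $\sigma(\widehat{G},G)$-compact.

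Next I would feed $K$ into the definition of $g$-barrelledness of $G$: being $\sigma(\widehat{G},G)$-compact, $K$ is equicontinuous, i.e.\ contained in $U^\triangleright$ for a suitable neighborhood $U$ of $0$ in $G$; in particular $K$ itself is an equicontinuous subset of $\widehat{G}$.

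Finally I would invoke the Ascoli theorem in the form valid for topological groups: on an equicontinuous subset of $\widehat{G}$ the compact-open topology of $G^\wedge$ and the pointwise-convergence topology $\sigma(\widehat{G},G)$ coincide. Applying this to $K$, we conclude that the $\sigma(\widehat{G},G)$-compact set $K$ is compact in $G^\wedge$. Since $K$ was an arbitrary weakly compact subset of $G^\wedge$, the group $G^\wedge$ has the Glicksberg property.

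The only non-routine ingredient is this last Ascoli-type equality of topologies on equicontinuous families of characters, which is classical in the duality theory of abelian topological groups (and also underlies Fact \ref{f:Mackey-Ban}); everything else is bookkeeping with polars and with the inclusion $\alpha_G(G)\subseteq G^{\wedge\wedge}$. I expect the only real care to be in quoting that fact in precisely the shape needed and in keeping straight which "weak" topology ($\sigma(\widehat{G},G)$ versus $\sigma(\widehat{G},G^{\wedge\wedge})$) is in play at each step.
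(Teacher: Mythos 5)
Your proposal is correct and follows essentially the same route as the paper: pass from $\sigma(\widehat{G},G^{\wedge\wedge})$-compactness to $\sigma(\widehat{G},G)$-compactness, invoke $g$-barrelledness to get $K\subseteq U^\triangleright$ for some neighborhood $U$ of zero, and then upgrade to compactness in $G^\wedge$. The only cosmetic difference is the last step: you apply the Ascoli-type coincidence of the compact-open and pointwise topologies on equicontinuous sets directly to $K$, while the paper notes that $U^\triangleright$ is compact in $G^\wedge$ and that $K$ is closed therein --- both variants rest on the same classical fact.
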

\begin{proof}
Let $K$ be a $\sigma(\widehat{G},G^{\wedge\wedge})$-compact subset of  $\widehat{G}$. Then $K$ is $\sigma(\widehat{G},G)$-compact as well, so $K$ is equicontinuous. Hence there is a neighborhood $U$ of zero in $G$ such that $K\subseteq U^\triangleright$, see Corollary 1.3 of \cite{CMPT}. The set $U^\triangleright$ is a compact subset of $G^\wedge$ by Fact \ref{f:Mackey-Ban}. As $K$ is also a closed subset of $G^\wedge$, we obtain that $K$ is compact in $G^\wedge$. Thus $G^\wedge$ has the Glicksberg property.
\end{proof}
For reflexive groups this proposition can be reversed.
\begin{proposition} \label{p:Mackey-g-bar-refl}
If $G$ is a reflexive group, then $G$ is $g$-barrelled if and only if $G^\wedge$ has the Glicksberg property.
\end{proposition}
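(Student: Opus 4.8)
The plan is to prove only the nontrivial implication, since the other direction is precisely Proposition \ref{p:Mackey-g-barrelled} (which does not even use reflexivity). So I assume that $G$ is reflexive and that $G^\wedge$ has the Glicksberg property, and I aim to show that $G$ is $g$-barrelled, i.e. that every $\sigma(\widehat{G},G)$-compact subset of $\widehat{G}$ is equicontinuous.

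First I would fix a $\sigma(\widehat{G},G)$-compact subset $K$ of $\widehat{G}$ and reinterpret this compactness on the topological group $G^\wedge$. Since $G$ is reflexive, $\alpha_G\colon G\to G^{\wedge\wedge}$ is a topological isomorphism, so under this identification the dual group of $G^\wedge$ is $G$ itself, with the pairing $\langle\chi,g\rangle=\chi(g)$. Consequently $\sigma(\widehat{G},G)$ is exactly the weak topology $\sigma\big(G^\wedge,(G^\wedge)^\wedge\big)$ of the group $G^\wedge$. Hence $K$ is a weakly compact subset of $G^\wedge$, and the Glicksberg property of $G^\wedge$ yields that $K$ is compact in $G^\wedge$.

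Next I would transfer polars back to $G$. Since $K$ is a compact subset of $G^\wedge$, by the definition of the compact-open topology the set $K^\triangleright$ is a neighborhood of zero in $G^{\wedge\wedge}$. Because $\alpha_G$ is a topological isomorphism, $U:=\alpha_G^{-1}\big(K^\triangleright\big)=K^\triangleleft$ is a neighborhood of zero in $G$. As $K\subseteq K^{\triangleleft\triangleright}=U^\triangleright$, Corollary 1.3 of \cite{CMPT} shows that $K$ is equicontinuous. Since $K$ was an arbitrary $\sigma(\widehat{G},G)$-compact subset of $\widehat{G}$, the group $G$ is $g$-barrelled.

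I do not expect a serious obstacle: the argument is a mirror image of the proof of Proposition \ref{p:Mackey-weak-qc-Glick}, using full compactness in place of quasi-convex compactness and the Glicksberg property in place of the weak${}^\ast$ $qc$-Glicksberg property. The only points that require care are the identification of $\sigma(\widehat{G},G)$ with the weak topology of $G^\wedge$ (this is exactly where reflexivity enters) and the fact that $\alpha_G^{-1}$ sends a zero-neighborhood of $G^{\wedge\wedge}$ to a zero-neighborhood of $G$ (immediate from reflexivity).
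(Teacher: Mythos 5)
Your proof is correct and follows essentially the same route as the paper: reflexivity identifies $\sigma(\widehat{G},G)$ with $\sigma(\widehat{G},G^{\wedge\wedge})$, the Glicksberg property upgrades $K$ to a compact subset of $G^\wedge$, and then $K^\triangleleft=\alpha_G^{-1}(K^\triangleright)$ is a zero-neighborhood whose polar contains $K$, giving equicontinuity via Corollary 1.3 of \cite{CMPT}. The converse is indeed just Proposition \ref{p:Mackey-g-barrelled}, exactly as the paper states.
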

\begin{proof}
Assume that $G^\wedge$ has the Glicksberg property and $K$ is  a $\sigma(\widehat{G},G)$-compact subset of  $\widehat{G}$. By the reflexivity of $G$, $K$ is also $\sigma(\widehat{G},G^{\wedge\wedge})$-compact. So $K$ is compact in $G^\wedge$ by the Glicksberg property. Therefore $K^\triangleright$ is a neighborhood of zero in $G^{\wedge\wedge}$. So, by the reflexivity of $G$, $K^\triangleleft =\alpha_G^{-1}(K^\triangleright)$ is a neighborhood of zero in $G$. Since $K\subseteq K^{\triangleleft\triangleright} $ we obtain that $K$ is equicontinuous, see Corollary 1.3 of \cite{CMPT}. Thus $G$ is $g$-barrelled. The converse assertion follows from Proposition \ref{p:Mackey-g-barrelled}.
\end{proof}

Recall that a topological group $X$ is said to have a {\it subgroup topology} if it has a base at the identity consisting of subgroups. For the definition and properties of nuclear groups, see \cite{Ban}.
\begin{lemma} \label{l:Mackey-linear}
Let $G$ be an abelian topological group with a subgroup topology. Then $G$ is a  locally quasi-convex nuclear group and has  the  Glicksberg property.
\end{lemma}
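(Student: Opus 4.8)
The plan is to prove the three assertions in turn, reducing each to the discrete quotients $G/H$, where $H$ runs over a base $\mathcal{B}$ of neighbourhoods of $0$ consisting of subgroups. First I record that every $H\in\mathcal{B}$ is in fact an \emph{open} (hence closed) subgroup, since any subgroup containing a neighbourhood of $0$ is open.

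\textbf{Local quasi-convexity.} I would show every $H\in\mathcal{B}$ is quasi-convex, which gives a base of quasi-convex neighbourhoods. Since $\chi(H)$ is a subgroup of $\Ss$ and the only subgroup of $\Ss$ contained in $\Ss_+$ is $\{1\}$, one has $H^\triangleright=\{\chi\in\widehat G:\chi(H)=\{1\}\}$, and these are exactly the characters factoring through the quotient map $q_H\colon G\to G/H$ (all of which are continuous, as $G/H$ is discrete); thus $H^\triangleright$ is identified with $\widehat{G/H}$. Because discrete abelian groups are $MAP$, $\bigcap\{\ker\chi:\chi\in H^\triangleright\}=H$; and if $g\notin H$, picking $\chi\in H^\triangleright$ with $\chi(g)\neq1$, the nontrivial cyclic subgroup $\langle\chi(g)\rangle$ of $\Ss$ is not contained in $\Ss_+$, so $\chi^n(g)\notin\Ss_+$ for some $n$ with $\chi^n\in H^\triangleright$. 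Hence $H^{\triangleright\triangleleft}=H$. I also note for later use that $H=\bigcap\{\ker\chi:\chi\in H^\triangleright\}$ is an intersection of $\sigma(G,\widehat G)$-closed sets $\chi^{-1}(\{1\})$, hence $\sigma(G,\widehat G)$-closed.

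\textbf{Nuclearity.} The topology of $G$ is the initial topology with respect to the family $\{q_H\colon G\to G/H\}_{H\in\mathcal{B}}$, the quotients carrying the discrete topology: each $q_H$ is continuous, and conversely $H=q_H^{-1}(0)$ is open for the initial topology, so the two agree. Assuming $G$ Hausdorff (the general case reduces to this by passing to $G/\overline{\{0\}}$, which again has a subgroup topology), the diagonal map is therefore a topological embedding of $G$ into $\prod_{H\in\mathcal{B}}G/H$. Each factor $G/H$ is discrete, hence locally compact abelian, hence a nuclear group by \cite{Ban}; arbitrary products and subgroups of nuclear groups are nuclear by \cite{Ban}; so $G$ is nuclear (and this gives local quasi-convexity a second time, via \cite{Ban}).

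\textbf{The Glicksberg property.} Let $K\subseteq G$ be $\sigma(G,\widehat G)$-compact. For $\psi\in\widehat{G/H}$ the character $\psi\circ q_H$ lies in $\widehat G$, so $q_H\colon(G,\sigma(G,\widehat G))\to(G/H,\sigma(G/H,\widehat{G/H}))$ is continuous and $q_H(K)$ is $\sigma(G/H,\widehat{G/H})$-compact; since $G/H$ is locally compact abelian it respects compactness by the classical theorem of Glicksberg mentioned above, so $q_H(K)$ is compact, hence finite, in the discrete group $G/H$. Thus $K$ is covered by finitely many cosets $g_1+H,\dots,g_m+H$ of $H$. Since $H$ is $\sigma(G,\widehat G)$-closed, so is each $g_j+H$, and each $K\cap(g_j+H)$ is closed in $(K,\sigma(G,\widehat G))$; as finitely many of them partition $K$, each is also open there. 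Hence every basic $\tau$-open set $K\cap(g+H)$ ($g\in G$, $H\in\mathcal{B}$) is $\sigma(G,\widehat G)$-open in $K$, so $\tau|_K=\sigma(G,\widehat G)|_K$ (the reverse inclusion being automatic), and $K$, being weakly compact, is $\tau$-compact. I do not expect a serious obstacle here: the only content is the reduction to the discrete quotients together with Glicksberg's theorem, and the sole point needing care is tracking in which of the topologies $\tau$, $\sigma(G,\widehat G)$ the sets $g+H$ are open or closed; the non-Hausdorff case and the ``initial topology implies embedding'' step are routine.
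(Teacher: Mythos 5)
Your proof is correct, but it takes a genuinely more self-contained route than the paper. The paper's argument is essentially a chain of citations: $G$ embeds into a product of discrete groups by Proposition~2.2 of \cite{AG} (which is exactly your ``initial topology $\Rightarrow$ diagonal embedding'' step), nuclearity and local quasi-convexity then follow from Propositions~7.5, 7.6 and Theorem~8.5 of \cite{Ban}, and the Glicksberg property is imported wholesale from the Banaszczyk--Mart\'{\i}n-Peinador theorem on weakly (pseudo)compact subsets of nuclear groups \cite{BaMP2}. You instead prove local quasi-convexity directly (open subgroups are quasi-convex, via the fact that the only subgroup of $\Ss$ inside $\Ss_+$ is trivial --- a correct and standard argument), and, more substantially, you derive the Glicksberg property from scratch: pushing a weakly compact set $K$ into each discrete quotient $G/H$, invoking only the classical Glicksberg theorem for discrete (locally compact) groups to get finitely many cosets, and then observing that $\tau$ and $\sigma(G,\widehat G)$ agree on $K$ because the finitely many $\sigma$-closed cosets partitioning $K$ are automatically relatively open. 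This bypasses the nuclear-group machinery of \cite{BaMP2} entirely (nuclearity is needed only for the statement that $G$ is nuclear, where you, like the paper, still rely on the permanence properties in \cite{Ban}), and it even yields slightly more information, namely that the original and weak topologies coincide on every weakly compact set. The only points needing the care you already flag are the Hausdorff reduction and checking that quasi-convexity is computed against \emph{continuous} characters, which holds since characters factoring through a discrete quotient by an open subgroup are continuous.
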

\begin{proof}
By Proposition 2.2 of \cite{AG}, $G$ embeds into a product of discrete groups. Therefore $G$ is a  locally quasi-convex nuclear group by Propositions 7.5 and 7.6 and Theorem 8.5 of \cite{Ban}. Finally, the group $G$ has the Glicksberg property by  \cite{BaMP2}.
\end{proof}

To prove Theorem \ref{t:Mackey-finite-exp} we need the following proposition.
\begin{proposition} \label{p:nuclear-Mackey}
Let $(G,\tau)$ be a locally quasi-convex abelian group of finite exponent. Then $(G,\tau)$ and hence also $(G,\tau)^\wedge$ have the  Glicksberg property.
\end{proposition}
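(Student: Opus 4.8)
The plan is to prove that every locally quasi-convex abelian group $(G,\tau)$ of finite exponent in fact has a \emph{subgroup topology}, i.e.\ a neighbourhood base at $0$ consisting of open subgroups; once this is established, Lemma \ref{l:Mackey-linear} immediately gives the Glicksberg property for $(G,\tau)$, and the assertion for $(G,\tau)^\wedge$ follows by applying the same conclusion to the dual group.

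First I would isolate an elementary fact about the circle: if $\zeta\in\Ss$ is a root of unity with $\zeta^{k}\in\Ss_+$ for every integer $k\geq 1$, then $\zeta=1$. Writing $\zeta=e^{2\pi i p/q}$ in lowest terms, the powers of $\zeta$ run through the cyclic group $\{e^{2\pi i j/q}:0\leq j<q\}$; for $q=2$ one already has $\zeta=-1\notin\Ss_+$, and for $q\geq 3$ some $j/q$ with $1\leq j\leq q-1$ lies in the open interval $(1/4,3/4)$, so that $e^{2\pi i j/q}$ has negative real part; hence $q=1$. Now let $n=\exp(G)$ and let $U$ be a quasi-convex neighbourhood of $0$, so that $U=U^{\triangleright\triangleleft}$. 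Using continuity of addition I would pick a neighbourhood $V\ni 0$ of $0$ with $\underbrace{V+\dots+V}_{n}\subseteq U$; since $0\in V$ this gives $kV\subseteq U$ for all $1\leq k\leq n$, so for every $\chi\in U^{\triangleright}$ and $v\in V$ we get $\chi(v)^{k}=\chi(kv)\in\Ss_+$ for $1\leq k\leq n$. As $\chi(v)^{n}=\chi(nv)=\chi(0)=1$, the element $\chi(v)$ is a root of unity all of whose positive powers lie in $\Ss_+$, whence $\chi(v)=1$ by the fact above. Therefore $V\subseteq H_U:=\bigcap_{\chi\in U^{\triangleright}}\ker\chi$, which shows that the subgroup $H_U$ is a neighbourhood of $0$, hence open; and since $\ker\chi\subseteq\chi^{-1}(\Ss_+)$ we also obtain $H_U\subseteq(U^{\triangleright})^{\triangleleft}=U^{\triangleright\triangleleft}=U$. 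Because the quasi-convex neighbourhoods of $0$ form a base, $G$ has a base of open subgroups at $0$, so by Lemma \ref{l:Mackey-linear} the group $(G,\tau)$ is a locally quasi-convex nuclear group with the Glicksberg property.

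It remains to pass to the dual. The dual group $G^\wedge$ of an arbitrary abelian topological group is locally quasi-convex (see \cite{Aus,Ban}), and here it is of finite exponent: for every $\chi\in\widehat{G}$ and $g\in G$ one has $\chi(g)^{n}=\chi(ng)=1$, so $n\chi=0$ and $\exp(G^\wedge)\mid n$. Hence the argument just carried out, applied verbatim to $G^\wedge$ in place of $G$, shows that $G^\wedge$ has a subgroup topology and, by Lemma \ref{l:Mackey-linear}, the Glicksberg property. I expect the only genuinely delicate point to be the circle-group fact — in particular making sure the quantifier over $k$ really captures \emph{all} powers of $\chi(v)$ once $\chi(v)^{n}=1$ is known — together with correctly recalling that the compact-open dual is always locally quasi-convex; the rest is a routine manipulation of polars.
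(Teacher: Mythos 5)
Your proof is correct and follows the same route as the paper: reduce to showing that the topology of $(G,\tau)$ (and of its dual, which is again locally quasi-convex and of exponent dividing $\exp(G)$) is a subgroup topology, and then invoke Lemma \ref{l:Mackey-linear}. The only difference is that the paper outsources the subgroup-topology step to Proposition 2.1 of \cite{AG}, whereas you prove it directly via the observation that a root of unity all of whose positive powers lie in $\Ss_+$ must equal $1$; that argument, together with the polar manipulation $V\subseteq H_U\subseteq U^{\triangleright\triangleleft}=U$, is sound.
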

\begin{proof}
Propositions 2.1  of \cite{AG} implies that the topologies of the groups $(G,\tau)$ and  $(G,\tau)^\wedge$ are subgroup topologies, and Lemma \ref{l:Mackey-linear} applies.
\end{proof}

\begin{proof}[Proof of Theorem \ref{t:Mackey-finite-exp}]
Since $(G,\tau)$ is locally quasi-convex, Proposition \ref{p:nuclear-Mackey} implies that  $(G,\tau)^\wedge$ has the Glicksberg property. Thus  $(G,\tau)$ is a Mackey group by Theorem \ref{t:weak-Mackey}.
\end{proof}

For  Tychonoff spaces $X$ and $Y$ we denote by $\CC(X,Y)$ the space  of all continuous functions from $X$ into $Y$ endowed with the compact-open topology. 
R.~Pol and F.~Smentek \cite{PolSmen} proved that the group $\CC(X,D)$ is reflexive for every  finitely generated discrete group $D$ and each zero-dimensional realcompact $k$-space $X$. This result and Theorem \ref{t:Mackey-finite-exp}  immediately imply
\begin{corollary} \label{c:Mackey-C(X,F)-discrete}
Let $X$ be a zero-dimensional realcompact $k$-space and $\ff$ be a finite abelian group. Then  $\CC(X,\ff)$ is  a Mackey group.
\end{corollary}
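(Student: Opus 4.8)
The plan is to combine the two ingredients quoted immediately before the statement. First I would verify that the Pol--Smentek reflexivity theorem applies to $\CC(X,\ff)$: by hypothesis $X$ is a zero-dimensional realcompact $k$-space, and a finite abelian group $\ff$ is in particular a finitely generated discrete group, so the theorem of \cite{PolSmen} gives that $\CC(X,\ff)$ is a reflexive abelian topological group. (In particular $\CC(X,\ff)$ is locally quasi-convex, since $\CC(X,\ff)\cong\CC(X,\ff)^{\wedge\wedge}$ and every dual group is locally quasi-convex; this is the standing hypothesis implicit in Theorem~\ref{t:Mackey-finite-exp}.)

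Next I would check that $\CC(X,\ff)$ has finite exponent. Let $n$ be the exponent of the finite group $\ff$, so that $nz=0$ for every $z\in\ff$. Since the group operations on $\CC(X,\ff)$ are defined pointwise, for every $f\in\CC(X,\ff)$ and every $x\in X$ we have $(nf)(x)=n\,f(x)=0$, hence $nf=0$. Thus the exponent of $\CC(X,\ff)$ divides $n$ and, in particular, is finite.

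Having a reflexive abelian group of finite exponent, I would then simply invoke Theorem~\ref{t:Mackey-finite-exp} to conclude that $\CC(X,\ff)$ is a Mackey group. I do not expect any genuine obstacle: the corollary is an immediate deduction once one observes that $\ff$ finite implies $\ff$ finitely generated and discrete, and that $\CC(X,\ff)$ inherits finite exponent from $\ff$; the only point worth spelling out is this last inheritance, which is the one-line pointwise computation above.
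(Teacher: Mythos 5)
Your proposal is correct and follows exactly the paper's intended deduction: apply the Pol--Smentek reflexivity theorem (noting a finite abelian group is finitely generated and discrete) and then Theorem~\ref{t:Mackey-finite-exp}, after the one-line observation that $\CC(X,\ff)$ inherits finite exponent from $\ff$. No differences worth noting.
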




We end this section with the following two questions. We do not know whether the converse in Theorem  \ref{t:weak-Mackey} is true.
\begin{question} \label{question-weak-Glick}
Let $G$ be a reflexive Mackey group. Does $G^\wedge$ have the $qc$-Glicksberg property?
\end{question}

 Set $\mathfrak{F}_0(\Ss):=(c_0(\Ss),\mathfrak{u}_0)$, see Remark \ref{rem:Mackey-c0(S)}. Then the group $\mathfrak{F}_0(\Ss)$ is reflexive \cite{Gab} and does not have the Glicksberg property by \cite{Ga-Top}. These results motivate the following question.
\begin{question} \label{question-free}
Does $\mathfrak{F}_0(\Ss)$ have the $qc$-Glicksberg property?
\end{question}
This question is of importance because the dual group $\mathfrak{F}_0(\Ss)^\wedge$ is the free abelian topological group $A(\mathfrak{s})$ over a convergent sequence $\mathfrak{s}$, see \cite{Gab}. So the positive answer to this question together with  Theorem \ref{t:weak-Mackey} would imply: (1)  the group $A(\mathfrak{s})$ is a Mackey group, answering in the affirmative a question posed in \cite{Gab-Mackey}, and (2) there are locally quasi-convex (even reflexive and Polish) abelian groups with the $qc$-Glicksberg property but without the Glicksberg property. On the other hand, under the assumptions that Question \ref{question-free} has a negative answer and the group $A(\mathfrak{s})$ is Mackey, we obtain a negative answer to Question \ref{question-weak-Glick}.


\section{Proof of Theorem \ref{t:Mackey-Ck-realcom}} \label{sec:Mackey-Ck-realcom}


Let $E$ be a nontrivial locally convex space and denote by $E'$ the topological dual space of $E$. Clearly,  $E$ is also an abelian topological group. Therefore we can consider the group $\widehat{E}$ of all continuous characters of $E$. The next important result is proved in \cite{HeZu,Smith}, see also \cite[23.32]{HR1}.
\begin{fact} \label{f:Mackey-group-lcs}
Let $E$ be a locally convex space. Then the mapping $p:E' \to \widehat{E}$, defined by the equality
\[
p(f)=\exp\{ 2\pi i f\}, \quad \mbox{ for all } f\in E',
\]
is a group isomorphism between $E'$ and $\widehat{E}$.
\end{fact}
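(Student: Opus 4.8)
The statement is purely algebraic (no topology is imposed on $E'$ or $\widehat E$), so the plan is to check that $p$ is an injective group homomorphism and then, with more work, that it is surjective.

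That $p$ is a homomorphism is immediate from $\exp\{2\pi i(f+g)\}=\exp\{2\pi i f\}\cdot\exp\{2\pi i g\}$, and each $p(f)$ lies in $\widehat E$ because $f$ is continuous. For injectivity, if $p(f)\equiv 1$ then $f(E)\subseteq\ZZ$; but $f(E)$ is a linear subspace of $\RR$, so $f(E)=\{0\}$ and $f=0$. Only the vector-space structure of $E$ enters here.

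For surjectivity, identify $\Ss$ with $\TT=\RR/\ZZ$ and write $q\colon\RR\to\TT$ for the quotient map; given a continuous character $\chi\colon E\to\TT$, the goal is a continuous $\RR$-linear $f\colon E\to\RR$ with $q\circ f=\chi$. First I would pick a balanced neighbourhood $V$ of $0$ with $\chi(V)\subseteq q\bigl((-\tfrac16,\tfrac16)\bigr)$ and factor $\chi|_V$ through the continuous branch $\phi=(q|_{(-1/6,1/6)})^{-1}\circ\chi|_V\colon V\to(-\tfrac16,\tfrac16)$, which satisfies $\phi(0)=0$ and $q\circ\phi=\chi|_V$. Next, choosing a balanced $W$ with $W+W\subseteq V$, a one-line estimate shows $\phi(a+b)=\phi(a)+\phi(b)$ for all $a,b\in W$: the quantity $\phi(a)+\phi(b)-\phi(a+b)$ lies in $\ZZ$ (apply $q$ and use additivity of $\chi$) and in $(-\tfrac12,\tfrac12)$ (by the range of $\phi$), hence is $0$. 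Since $W$ is balanced, $x/n\in W$ forces $jx/(nk)\in W$ for $1\le j\le k$, and iterating additivity along this chain yields $n\,\phi(x/n)=nk\,\phi\bigl(x/(nk)\bigr)$. As every $0$-neighbourhood in a topological vector space is absorbing, each $x\in E$ has $x/n\in W$ for all large $n$; define $f(x):=n\,\phi(x/n)$. Passing to common refinements shows $f$ is well defined, additive, agrees with $\phi$ on $W$ (hence is continuous at $0$ and therefore everywhere), and satisfies $q\circ f=\chi$ by construction. An additive continuous map on a topological vector space is $\RR$-linear, so $f\in E'$ and $p(f)=\chi$; together with injectivity this gives the group isomorphism.

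Local convexity is in fact not used anywhere — the argument works for any topological vector space $E$; alternatively one could obtain the lift $f$ from covering-space theory, a TVS being path-connected, simply connected, and (having a balanced, hence star-shaped, local base) locally path-connected. The one genuinely delicate point is assembling the global lift $f$ from the local branch $\phi$ and verifying it is well defined and additive; the remaining verifications are routine bookkeeping.
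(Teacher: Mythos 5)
Your proof is correct. Note that the paper does not prove this Fact at all: it is quoted from the literature (Hewitt--Zuckerman, Smith, and \cite[23.32]{HR1}), so there is no internal argument to compare against. What you have written is essentially the standard proof from those sources: lift a continuous character $\chi\colon E\to\TT$ through the covering $q\colon\RR\to\TT$ by inverting $q$ on a small interval, verify local additivity of the branch $\phi$ via the ``integer in $(-\tfrac12,\tfrac12)$'' trick, and propagate to a global additive lift $f(x)=n\,\phi(x/n)$ using that balanced neighbourhoods are absorbing. All the delicate steps check out: the estimate $\phi(a)+\phi(b)-\phi(a+b)\in\ZZ\cap(-\tfrac12,\tfrac12)=\{0\}$ for $a,b\in W$ is valid because the three values lie in $(-\tfrac16,\tfrac16)$; the chain $jx/(nk)\in W$ for $1\le j\le k$ does follow from balancedness and gives $\phi(ky)=k\phi(y)$ by induction (each step adds two elements of $W$); well-definedness and additivity of $f$ follow by passing to a common denominator; and continuity at $0$ plus additivity yields continuity and hence $\RR$-linearity via $\Q$-homogeneity. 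Your closing observation is also accurate and worth keeping: local convexity plays no role, and the statement holds for an arbitrary real topological vector space, which is in fact the generality in which \cite[23.32]{HR1} states it. The only cosmetic caveat is that the Fact as stated identifies $\widehat{E}$ with $\Ss$-valued characters while your lift works with $\TT=\RR/\ZZ$; the identification $z=\exp\{2\pi i t\}$ is exactly the map $q$, so nothing is lost.
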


Recall that the dual space of $\CC(X)$ is the space $M_c(X)$ of all Borel measures $\mu$ on $X$ with compact support $\supp(\mu)$, see \cite[Corollary 7.6.5]{Jar}. For a point $x\in X$ we denote by $\delta_x$ the point measure associated with the linear form $f\mapsto f(x)$.

The next lemma is crucial for our proof of Theorem \ref{t:Mackey-Ck-realcom}.
\begin{lemma} \label{l:Mackey-Ck}
Let $A$ be a functionally bounded subset of a Tychonoff space $X$. If there is a discrete family $\U=\{ U_n\}_{n\in\NN}$  of open subsets of $X$ such that $U_n \cap A \not=\emptyset$ for every $n\in\NN$, then $\CC(X)$ is not a Mackey group.
\end{lemma}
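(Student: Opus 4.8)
The plan is to build, from the discrete family $\U = \{U_n\}$ and the functionally bounded set $A$, a locally quasi-convex group topology $\nu$ on $\CC(X)$ which is compatible with the compact-open topology $\tau$ but is strictly finer, thereby violating the Mackey property. The natural candidate is a topology finer than $\tau$ whose extra neighborhoods of zero are designed using the points $x_n \in U_n \cap A$. Concretely, for each $n$ pick $x_n \in U_n \cap A$ and a continuous function $h_n \colon X \to [0,1]$ supported in $U_n$ with $h_n(x_n) = 1$; discreteness of $\U$ guarantees that for any choice of scalars $c_n$ the function $\sum_n c_n h_n$ is continuous on $X$. I would then consider on the dual side the sequence of point measures $\delta_{x_n} \in M_c(X) = \CC(X)'$ and use it to manufacture, via Fact \ref{f:Mackey-group-lcs}, new equicontinuous-looking character sets that are $\sigma(\widehat{G},G)$-compact but not $\tau$-equicontinuous. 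This mirrors the proof of the Nachbin--Shirota theorem (Theorem \ref{t:Mackey-barrelled-NS}), where exactly such a family witnesses the failure of barrelledness; here we must upgrade that witness from the linear/barrelled setting to the group/Mackey setting.

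The key steps, in order, are: (1) Fix $x_n \in U_n\cap A$ and bump functions $h_n$ as above, and record that the map $\ZN \to \CC(X)$ sending $(c_n)$ with finite support to $\sum c_n h_n$ is a topological embedding onto a closed subgroup (using discreteness of $\U$ and functional boundedness of $A$ to control the sup of $\sum c_n h_n$ on any compact set, since a compact — indeed any functionally bounded — set meets only finitely many $U_n$). (2) Using $p(\delta_{x_n}) = \exp\{2\pi i \delta_{x_n}\} \in \widehat{\CC(X)}$, form the set $K := \{0\}\cup\{p(\tfrac{1}{?}\delta_{x_n})\}$ or, more robustly, take a sequence $\mu_k \to 0$ built from the $\delta_{x_n}$ in the $\sigma(M_c(X),\CC(X))$-topology; show $K$ is $\sigma(\widehat{G},G)$-compact. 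The point is that $\delta_{x_n} \to 0$ in $\sigma(M_c(X),\CC(X))$ is false in general, so instead I would pass to a topology $\nu$ defined directly by declaring the polars $V^{\triangleleft}$ of such sets to be neighborhoods of zero, i.e. give $\CC(X)$ the topology of uniform convergence on a suitable bornology larger than the compact sets but still yielding the same characters. (3) Verify $\nu$ is locally quasi-convex (polars of sets of characters are automatically quasi-convex) and compatible with $\tau$: every $\nu$-continuous character comes from $M_c(X)$, which requires showing the added dual vectors are already in $M_c(X)$ — true, since each $\delta_{x_n} \in M_c(X)$ and finite sums stay there. (4) Show $\nu \not\le \tau$: the neighborhood $V^{\triangleleft}$ is not a $\tau$-neighborhood of zero because, using the embedding of step (1), one finds functions $\sum c_n h_n$ that are $\tau$-small on every compact set (as each compact set meets finitely many $U_n$) but have $|\delta_{x_n}(\sum c_m h_m)| = |c_n|$ unbounded, so they escape $V^{\triangleleft}$.

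The main obstacle I expect is step (2)--(4): choosing the right family of dual vectors and the right topology $\nu$ so that it is simultaneously (a) strictly finer than $\tau$, (b) still compatible, and (c) locally quasi-convex. The tension is that making $\nu$ finer by throwing in more "bounded" sets to converge on risks introducing new characters (breaking compatibility), while keeping the character group fixed is exactly what the functional boundedness of $A$ is for — a functionally bounded set carries no new bounded linear functionals, so uniform convergence on it adds no characters. I would handle this by taking $\nu$ to be the topology of uniform convergence on the family of all functionally bounded subsets of $X$ (equivalently, passing through the associated bornological-type modification), invoking that this has the same dual $M_c(X)$ precisely when... — and then the strict inequality $\nu > \tau$ is witnessed by the non-compact functionally bounded set $\overline{\{x_n : n\in\NN\}}$, which fails to be compact exactly because the $U_n$ are discrete. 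The quasi-convexity of the new basic neighborhoods is the easy part, being automatic for polars.
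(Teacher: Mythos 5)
Your overall strategy --- pick $x_n\in U_n\cap A$, use the associated point evaluations to build a locally quasi-convex group topology on $C(X)$ that is strictly finer than the compact-open topology $\tau_k$ yet compatible with it --- is the same as the paper's, and you correctly locate the crux: the new topology must not create new characters. But your resolution of that crux does not work, and this is a genuine gap rather than a detail. You propose $\nu$ to be the topology of uniform convergence on functionally bounded sets (or polars of sets built from the $\delta_{x_n}$), and you justify compatibility by saying that ``each $\delta_{x_n}\in M_c(X)$ and finite sums stay there,'' and that ``a functionally bounded set carries no new bounded linear functionals.'' The latter is false: if $A=\{x_n\}$ is functionally bounded but not relatively compact, then every $\ell_1$-combination $\sum_n c_n\delta_{x_n}$ is continuous for the topology of uniform convergence on $A$, and when infinitely many $c_n$ are nonzero this functional has non-compact support, hence lies outside $M_c(X)$. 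By Fact \ref{f:Mackey-group-lcs} each such functional yields a new continuous character, so your $\nu$ is \emph{not} compatible with $\tau_k$. The danger is never the finite sums; it is precisely these infinite combinations, and your plan has no mechanism for excluding them.

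The paper's proof supplies exactly the missing mechanism, and it is irreducibly group-theoretic. One first normalizes $\chi_n:=(1/n)\delta_{x_n}$ (functional boundedness of $A$ is used here to get $(\chi_n(f))\in c_0$ for every $f$), and then induces the new topology $\Tt_0$ not from $\CC(X)\times c_0$ (that locally convex topology $\Tt$ has the too-large dual $M_c(X)\times\ell_1$) but from $\CC(X)\times\Ff_0(\Ss)$, where $\Ff_0(\Ss)$ is the group of null sequences in $\Ss$ with the uniform metric. Basic $\Tt_0$-neighborhoods constrain the values $\chi_n(f)$ only \emph{modulo} $\ZZ$; consequently, given a would-be character $p(\eta)$ with $\eta=\nu+\sum_n c_n\chi_n$ and infinitely many $c_n\neq 0$, one can choose a bump function $h$ supported in a single far-out $U_\alpha$ with $\chi_\alpha(h)=\left[\tfrac{1}{4c_\alpha}\right]\in\ZZ$, so that $h$ lies in the given $\Tt_0$-neighborhood while $\eta(h)$ is approximately $\tfrac14$ mod $\ZZ$ --- a contradiction that forces $c_n=0$ for almost all $n$ and hence $\eta\in M_c(X)$. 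Without this passage to $\Ff_0(\Ss)$ and the integer-part argument, compatibility cannot be established, and your step (4) (strictness, witnessed by $(n/2)g_n\to 0$ in $\tau_k$ but not in $\Tt_0$) is the only part of the plan that survives as written.
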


\begin{proof}
For every $n\in\NN$, take arbitrarily $x_n\in U_n\cap A$ and  set $\chi_n := (1/n) \delta_{x_n}$. Since $A$ is functionally bounded, we obtain that $\chi_n \to 0$ in the weak* topology on $M_c(X)$. Denote by $Q:c_0\to \Ff_0 (\Ss)$ the quotient map, so $Q\big( (x_n)_{n\in\NN}\big) = \big( (q(x_n))_{n\in\NN}\big)$, where $q:\IR\to \Ss$ is defined by $q(x)=\exp\{ 2\pi i x\}$. Now we can define the linear injective operator $F: C(X) \to \CC(X)\times c_0$ and the monomorphism $F_0 : C(X) \to \CC(X)\times \Ff_0(\Ss)$  setting ($\forall f\in C(X)$)
\[
\begin{split}
F(f) & := \big( f, R(f)\big), \mbox{ where } R(f):= \big(\chi_n(f)\big) \in c_0, \\
F_0(f)& := \big( f, R_0(f)\big), \mbox{ where } R_0(f):= Q\circ R(f)=\big( \exp\{ 2\pi i \chi_n(f)\} \big) \in \Ff_0(\Ss).
\end{split}
\]
Denote by $\Tt$ and $\Tt_0$ the topology on $C(X)$ induced from $\CC(X)\times c_0$ and $\CC(X)\times \Ff_0(\Ss)$, respectively. So $\Tt$ is a locally convex vector topology on $C(X)$ and $\Tt_0$ is a locally quasi-convex group topogy on $C(X)$ (since the group $\Ff_0(\Ss)$ is locally quasi-convex, and a subgroup of a product of locally quasi-convex groups is clearly locally quasi-convex). Denote by $\tau_k$ the compact-open topology on $C(X)$. Then, by construction, $\tau_k \leq \Tt_0 \leq\Tt$, so taking into account Fact \ref{f:Mackey-group-lcs} we obtain
\begin{equation}\label{equ:Mackey-Ck-1}
p\big( M_c(X) \big) \subseteq \widehat{\big( C(X),\Tt_0\big)} \subseteq p\big(  (C(X),\Tt)' \big).
\end{equation}

Let us show that the topologies $\tau_k$ and $\Tt_0$ are compatible. By (\ref{equ:Mackey-Ck-1}), it is enough to show that each character of $\big( C(X),\Tt_0\big)$ belongs to $p\big( M_c(X)\big)$. Fix $\chi\in \widehat{\big( C(X),\Tt_0\big)}$. Then  (\ref{equ:Mackey-Ck-1}) and the Hahn--Banach extension theorem imply that $\chi=p(\eta)$ for some
\[
\eta =\big(\nu,(c_n)\big)\in M_c(X) \times \ell_1, \mbox{ where } \nu \in M_c(X) \mbox{ and } (c_n)\in \ell_1,
\]
and
\[
\eta (f)=\nu(f) + \sum_{n\in\NN} c_n \chi_n (f), \quad \forall f\in C(X).
\]
To prove that $\chi\in p\big( M_c(X)\big)$ it is enough to show that $c_n=0$ for almost all indices $n$. Suppose for a contradiction that  $|c_n|>0$ for infinitely many indices $n$. Take a neighborhood $U$ of zero in $\Tt_0$ such that (see Fact \ref{f:Mackey-group-lcs})
\begin{equation} \label{equ:Mackey-Ck-9}
\eta(U) \subseteq \left( -\frac{1}{10},\frac{1}{10}\right) + \mathbb{Z}.
\end{equation}
We can assume that $U$ has a canonical form
\[
U=F_0^{-1}\left( \left( \{ f\in C(X): f(K)\subset (-\e,\e)\} \times \big(V^\NN \cap c_0(\Ss)\big)\right) \cap F_0\big(C(X)\big) \right),
\]
for some compact set $K\subseteq  X$, $\e >0$ and a neighborhood $V$ of the identity of $\Ss$.
For every $n\in\NN$, choose a continuous function $g_n: X\to [0,1]$ such that  $g_n(x_n)=1$  and $g_n(X\setminus U_n)=\{ 0\}$. So, by  the discreteness of $\U$, we obtain
\begin{equation} \label{equ:Mackey-Ck-3}
\chi_n(g_n)=\frac{1}{n}, \quad  \mbox{ and  } \;  \chi_m(g_n)=0 \; \mbox{  for every distinct $n,m\in\NN$  }.
\end{equation}

Let $C=\supp(\nu)$, so $C$ is a compact subset of $X$. Then the discreteness of the family $\U$ implies that there is $n_0\in \NN$ such that $U_n \cap (K\cup C)=\emptyset$ for every $n>n_0$. Since $|c_n|>0$ for infinitely many indices, we can find an index $\alpha >n_0$ such that $0<|c_\alpha|<1/100$ (recall that $(c_n)\in\ell_1$). Set
\[
h(x) = \left[ \frac{1}{4c_\alpha} \right] \cdot \alpha \cdot g_\alpha (x),
\]
were $[x]$ is the integral part of a real number $x$. 
Then  (\ref{equ:Mackey-Ck-3}) implies that 
\begin{equation} \label{equ:Mackey-Ck-4}
\chi_n(h)=0 \mbox{ if } n\not= \alpha, \; \mbox{ and } \; \chi_\alpha (h) = \left[ \frac{1}{4c_\alpha} \right] \cdot \alpha \cdot \frac{1}{\alpha} =\left[ \frac{1}{4c_\alpha} \right] \in \mathbb{Z}.
\end{equation}
Therefore $R_0(h)$ is the identity of $\mathfrak{F}_0(\Ss_0)$. Since also $h\in \{ f\in C(X): f(K)\subset (-\e,\e)\}$ we obtain that $h\in U$. Noting that $\nu(h)=0$ and setting $r_\alpha := \frac{1}{4c_\alpha} - \left[ \frac{1}{4c_\alpha}\right]$ (and hence $0\leq r_\alpha <1$), (\ref{equ:Mackey-Ck-4}) implies
\[
\frac{1}{4}-\frac{1}{100} < \eta (h)= c_\alpha \chi_\alpha (h)=  c_\alpha  \left[ \frac{1}{4c_\alpha} \right]=  c_\alpha  \left( \frac{1}{4c_\alpha} - r_\alpha\right) = \frac{1}{4} -c_\alpha r_\alpha <\frac{1}{4}+\frac{1}{100}.
\]
But these inequalities contradict the inclusion (\ref{equ:Mackey-Ck-9}). This contradiction shows that $c_n=0$ for almost all indices $n$, and hence $\eta\in M_c(X)$. Thus $\tau_k$ and $\Tt_0$ are compatible.

To complete the proof we have to show that the topology  $\Tt_0$ is strictly finer than $\tau_k$. First we note that $(n/2)g_n \to 0$ in $\tau_k$. Indeed, let $K_0$ be a compact subset  of $X$ and $\e >0$. Since the family $\U$ is discrete, there is $N\in\NN$ such that $U_n\cap K_0 =\emptyset$ for every $n> N$. Then $(n/2)g_n \in \{ f\in C(X): f(K_0)\subset (-\e,\e)\}$ for $n>N$. On the other hand, 
\[
F_0\big((n/2)g_n\big) =\big( (n/2)g_n, (\exp\{ 2\pi i \chi_k\big( (n/2)g_n\big)\} ) \big) =  \big( (n/2)g_n, (0,\dots,0,-1,0,\dots)\big),
\]
where $-1$ is placed in position $n$. So $(n/2)g_n\not\to 0$ in $\Tt_0$. Thus $\Tt_0$ is strictly finer than $\tau_k$.
\end{proof}

Now we are ready to prove Theorem \ref{t:Mackey-Ck-realcom}.
\begin{proof}[Proof of Theorem \ref{t:Mackey-Ck-realcom}]
Assume that $\CC(X)$ is a Mackey group. Let us show that  every functionally bounded subset of $X$ has compact closure.
Suppose for a contradiction that there is a closed functionally bounded subset $A$ of $X$ which  is not compact. Since $A$ is a closed subset of a realcompact space, there is a continuous real-valued function $f$ on  $X$ such that $f|_A$ is unbounded, see \cite[Problem 8E.1]{GiJ}. So there exists a discrete sequence of open sets $\U=\{ U_n\}_{n\in\NN}$ intersecting $A$. Therefore $\CC(X)$ is not a Mackey group by Lemma \ref{l:Mackey-Ck}. This contradiction shows that every functionally bounded subset of $X$ has compact closure. By the Nachbin--Shirota theorem \ref{t:Mackey-barrelled-NS}, the space $\CC(X)$ is barrelled.

Conversely, if $\CC(X)$ is a barrelled locally convex space, then it is a Mackey group by Proposition 5.4 of \cite{CMPT}.
\end{proof}
We do not know whether the assumption to be a realcompact space can be omitted in Theorem \ref{t:Mackey-Ck-realcom}.
\begin{question}
Let $X$ be a Tychonoff space. Is it true that $\CC(X)$ is barrelled if and only if it a Mackey group?
\end{question}

\bibliographystyle{amsplain}

\begin{thebibliography}{10}

\bibitem{Aus}
L.~Au{\ss}enhofer, \emph{Contributions to the duality theory of Abelian topological groups and to the theory of nuclear groups}, Dissertation, T\"{u}bingen, 1998, Dissertationes Math. (Rozprawy Mat.) \textbf{384} (1999), 113p.




\bibitem{AG} 
{L.~Au{\ss}enhofer, S.~Gabriyelyan}, On reflexive group topologies on abelian groups of finite exponent, Arch. Math. \textbf{99} (2012), 583--588.


\bibitem{Ban}
{W. Banaszczyk},  \emph{Additive subgroups of topological vector spaces}, LNM 1466, Berlin-Heidelberg-New York 1991.



\bibitem{BaMP2}
W.~Banaszczyk and E.~Mart\'{\i}n-Peinador, Weakly pseudocompact subsets of nuclear groups, J. Pure Appl. Algebra \textbf{138} (1999), 99--106.




\bibitem{BTAVM}
F. G. Bonales, F. J. Trigos-Arrieta, R. Vera Mendoza, A Mackey--Arens theorem for topological abelian groups, Bol. Soc. Mat. Mexicana \textbf{9} (2003), 79--88.


\bibitem{Cha}
{ M.J. Chasco}, Pontryagin duality for metrizable groups, Arch. Math. \textbf{70} (1998) 22-28.


\bibitem{CMPT}
M.~J.~Chasco, E.~Mart\'{\i}n-Peinador,  V.~Tarieladze, On Mackey topology for groups, Studia Math. \textbf{132} (1999), 257--284.

\bibitem{Eng}
R.~Engelking,   \emph{General topology}, Panstwowe Wydawnictwo Naukowe, 1985.

\bibitem{Gab}
{ S.~Gabriyelyan}, Groups of quasi-invariance and the Pontryagin duality,  Topology Appl. \textbf{157} (2010), 2786--2802.



\bibitem{Ga-Top}
 S.~Gabriyelyan, On topological properties of the group of the null sequences valued in an Abelian topological group, available in arXiv: 1306.511.


\bibitem{Gab-Mackey}
S.~Gabriyelyan, On the Mackey topology for abelian topological groups and locally convex spaces, preprint.

\bibitem{Gab-Cp}
 S.~Gabriyelyan, A  characterization of barrelledness of $C_p(X)$, available in ArXiv: 1512.00788.

\bibitem{GRNT}
J. Galindo, L. Recoder-N\'{u}\~{n}ez, M. Tkachenko, Reflexivity of prodiscrete topological groups, J. Math. Anal. Appl. \textbf{384} (2011), 320-–330.

\bibitem{GiJ}
{L.~Gillman and M.~Jerison}, \emph{Rings of continuous functions}, Van Nostrand, New York, 1960.

\bibitem{HR1}
{ E.~Hewitt, K.~A.~Ross,} {\em Abstract Harmonic Analysis}, Vol. I, 2nd ed. Springer-Verlag, Berlin, 1979.

\bibitem{HeZu}
E.~Hewitt, H. Zuckerman, A group-theoretic method  in approximation theory, Ann. Math. \textbf{52} (1950), 557--567.

\bibitem{Jar}
H.~Jarchow, \emph{Locally Convex Spaces}, B.G. Teubner, Stuttgart, 1981.


\bibitem{PolSmen}
R. Pol, F. Smentek, Note on reflexivity of some spaces of integer-valued functions, J. Math. Anal. Appl. \textbf{395} (2012), 251--257.

\bibitem{ReT3}
D.~Remus and F.~J.~Trigos-Arrieta, The Bohr topology of Moore groups,  Topology Appl. \textbf{97} (1999), 85--98.

\bibitem{Smith}
M.~F.~Smith, The Pontrjagin duality theorem in linear spaces, Ann. Math. \textbf{56} (1952), 248--253.


\end{thebibliography}

\end{document}